\newtheorem{mydef}{Definition}
\newtheorem{cor}{Corollary}
\newtheorem{myprop}{Proposition}
\newtheorem{mylemma}{Lemma}
\newtheorem{theorem}{Theorem}
\newcommand{\be}{\begin{equation}}
\newcommand{\ee}{\end{equation}}
\newcommand{\beq} {\begin{equation}}
\newcommand{\eeq} {\end{equation}}
\newcommand{\ba}{\begin{eqnarray}}
\newcommand{\ea}{\end{eqnarray}}
\begin{document}

	\title{	On a Torsion/Curvature Analogue of Dual Connections and Statistical Manifolds}
	%\title{On Dual Connections, Statistical Manifolds and Torsion}
	\author{Damianos Iosifidis}
	\affiliation{Laboratory of Theoretical Physics, Institute of Physics, University of Tartu, W. Ostwaldi 1, 50411 Tartu, Estonia.}
	\email{damianos.iosifidis@ut.ee}

	\date{\today}
	\begin{abstract}
		
	In analogy with the concept of a non-metric dual connection, which is essential in defining statistical manifolds, we develop that of a torsion dual connection. Consequently, we illustrate the geometrical meaning of such a torsion dual connection and show how the use of both connections  preserves the cracking of parallelograms in spaces equipped with a connection and its torsion dual. The coefficients of such a torsion dual connection are essentially computed by demanding a vanishing mutual torsion among the two connections. For this manifold we then prove two basic Theorems. In particular, if both connections are metric-compatible we show that there exists a specific $3$-form measuring how the connection and its torsion dual deviate away from the Levi-Civita one. Furthermore, we prove that for these torsion dual manifolds flatness of one connection does not necessary impose flatness on the other but rather that the curvature tensor of the latter is given by a specific divergence. Finally, we give a self-consistent definition of the mutual curvature tensor of two connections and subsequently define the notion of a curvature dual connection.
		
	\end{abstract}
	
	\maketitle
	
	\allowdisplaybreaks
	
	%\newpage
	
	\tableofcontents
	%\newpage

\section{Introduction}

Dual Connections \cite{amari1985differential,lauritzen1987statistical} are at the core of Information-Geometry \cite{amari1997information}. They are essential for the definition of 'Statistical Manifolds' \cite{amari1987differential} which despite of their name are purely geometric constructions  that find many applications even beyond statistics \cite{amari2000methods}. Geometrically speaking, given an affine connection $\nabla$, its dual connection $\nabla^{\star}$ is defined in such as way so as to preserve inner products of vectors given that one is transported with the former and the other with the latter \cite{amari1985differential}. These connections are non-metric but torsion-free\footnote{However, there are extensions of statistical manifolds admitting  torsion \cite{kurose2007statistical,matsuzoe2010statistical}. These are encountered in quantum statistical models \cite{amari2000methods}.} and there exists a totally symmetric tensor, called the cubic tensor, which is essentially a totally symmetric non-metricity and quantifies how much the geometries deviate away from the Riemannian one.

One of the consequences of non-metricity is that the inner products of vectors are no longer preserved. Then, the definition of the dual connection rests precisely on the demand that when one vector is transported with $\nabla$ and the other with respect to its dual $\nabla^{\star}$ the inner product will be preserved. In other words, given $\nabla$ the dual connection is obtained by demanding such an inner product to be preserved. It is then natural to ask if we can have an analogous definition for two connections with torsion. To be more precise, let us recall that one of the effects of torsion is the inability to form infinitesimal parallelograms. Indeed, if we take to curves with their associated tangent vectors and transport each one in the direction of the other (see example in section $IV$ for more details) the end result would be a pentagon if torsion is there. Namely, the vectors have twisted due to torsion. The question we now raise is if we can define a 'torsion dual connection' that would 'cooperate'
 with $\nabla$ in order to keep the parallelogram from breaking to pentagon when one vector is transported with respect to $\nabla$ and the other with respect to its torsion dual $\nabla^{\star}$. As we shall show this is indeed possible and furthermore such a demand uniquely specifies all the coefficients of the dual connection in terms of the coefficients of $\nabla$. In addition the mean connection, formed by $\nabla$ and its torsion dual, is the Levi-Civita  and duality is involutive (i.e. $(\nabla^{\star})^{\star}=\nabla$), as was also the case for usual non-metric dual connections.

Having defined this 'torsion dual connection' we then derive two basic Theorems analogous to some fundamental Theorems of Information Geometry. Firstly, we show that if the connection and its torsion dual are both metric then there exists a $3$-form $A$ such that the connection and its dual take the form $\nabla=\nabla^{(0)}+A$ and $\nabla=\nabla^{(0)}-A$ respectively. This totally antisymmetric tensor $A$ is the analogue of the totally symmetric cubic tensor that appears in statistical manifolds. Our second Theorem involves the curvatures of the two connections. In particular if the connection and its torsion dual are metric  we prove that $\nabla$-flatness does not imply $\nabla^{\star}$-flatness and vice versa as was the case for statistical manifolds. Our result rather shows that flatness of one of the connections, either $\nabla$ or its torsion dual, implies that the curvature of the other is given by a specific divergence-like term. However, a weaker constraint on the scalar curvature of the two connections is obtained, namely if $\nabla$ is scalar-flat then so is $\nabla^{\star}$ and also the other way around.

We then show that the torsion dual connection appears as a consequence of a vanishing mutual torsion of two connections, and in the same way the 'classical' dual connection used in information geometry is derived by demanding a vanishing mutual non-metricity (to be defined in section $III$) of the two connections. Finally, guided by this last observation after giving a self-consistent definition\footnote{As we shall show, other definitions of the mutual curvature that are found in the literature are rather unsatisfactory. } of the mutual curvature of two connections we define what we may call a "Curvature Dual Connection" by demanding a vanishing mutual curvature of the two connections.

The paper is organized as follows. Firstly we set up the conventions and notation and define some essential geometrical objects we are going to be using throughout. Then, we briefly review the concept of dual connections and statistical manifolds and also obtain some well known results using different methods which may provide new insights and a different fresh  look at some old problems. Consequently, in section $IV$ we present the main results of this study by first introducing the concept of the torsion dual connection, discussing some of its properties, and then proving two basic Theorems associated to this construction. Finally, in section $V$  we give a novel definition for the mutual curvature of two connections and with this we also define the notion of a curvature dual connection. We then conclude our results with a discussion.

\section{Geometry}
% A finite linear combination whose coefficients add up to 1 is called a convex
%linear combination
\subsection{Intro}

We consider an $n$-dimensional manifold $\mathcal{M}$ equipped with an affine connection $\nabla$. Recall that an affine connection on $\mathcal{M}$ is an $\mathbb{R}$-bilinear map $\nabla:$ $\mathfrak{X}(M)$ $\times$ $\mathfrak{X}(M)$ $ \rightarrow \mathfrak{X}(M) $
written $\nabla_{X}Y$ for $\nabla(X,Y)$ with the following two properties\footnote{Here $\mathcal{F}$ denotes the ring  $\mathcal{C}^{\infty}(M)$  of $\mathcal{C}^{\infty}$ functions on $\mathcal{M}$ and $X,Y$ $\in$  $\mathfrak{X}(M)$ with the later symbol denoting the set of $C^{\infty}$ vector fields on $\mathcal{M}$ . }
\begin{enumerate}
    \item $\nabla_{fX}Y = f \nabla_{X}{Y} $
    \item  $\nabla_{X}(fY)=(Xf)Y+ f\nabla_{X}Y$
\end{enumerate}
with the former expressing $\mathcal{F}$-linearity in $X$ and the later being the Leibniz rule. Note that the connection is not $\mathcal{F}$-linear in $Y$.

Having introduced an affine connection, and without the need of any metric, the curvature and and torsion of $\nabla$, over the manifold,  are given by
\beq
R(X,Y)Z:=\nabla_{X}\nabla_{Y}Z-\nabla_{Y}\nabla_{X}Z-\nabla_{[X,Y]}Z
\eeq
and
\beq
T(X,Y):=\nabla_{X}Y-\nabla_{Y}X-[X,Y]
\eeq
where $X,Y,Z$ are $C^{\infty}$ vector fields. These are constructed in such a way so as to be $\mathcal{F}$-linear in all their arguments, i.e. they are tensors.
In local coordinates we have the defining relation\footnote{Beware of our convention for the index placing in the connection coefficients.  }
\beq
\nabla_{\partial_{i}}\partial_{j}=\Gamma^{k}_{\;\; ji}\partial_{k} \label{concof}
\eeq
where the $\Gamma^{k}_{\;\;ji}$ are called the Christoffel symbols or just the connection coefficients. By abusing terminology, we will frequently refer to the connection coefficients simply as the connection, since this is quite common in the gravity community. Using this, we can derive the form of curvature and torsion in local coordinates, which in our conventions read\footnote{Note that in these conventions we have that $R(\partial_{i},\partial_{j})\partial_{k}=R^{m}_{\;\; kij}\partial_{m}$ and also $T(\partial_{i},\partial_{j})=T^{m}_{\;\; ij}\partial_{m}$.} 
 \beq
    R^{m}_{\;\; ijk}:=2 \partial_{[j} \Gamma_{\;\; |i|k]}^{m}+2 \Gamma^{m}_{\;\; l[j}\Gamma^{l}_{\;\; |i|k]} \label{Riem}
    \eeq
    and
   \beq
   T^{m}_{\;\; ij}:=\Gamma^{m}_{\;\; ji}-\Gamma^{m}_{\;\; ij}
    \eeq
respectively. In what follows shall also use the redefined torsion coefficients
\beq
S_{ij}^{\;\;\; k}:=-\frac{1}{2}T^{k}_{\;\; ij}=\Gamma^{k}_{\;\; [ij]}
\eeq
which are  somewhat more convenient for calculations. 

If one wants to define inner products, norms etc. on $\mathcal{M}$ then, in addition, one needs to endow the space with a metric $g$ such that
\beq
g(X,Y)=<X,Y>:=g_{ij}X^{i}Y^{j}
\eeq
where $X,Y \in \mathfrak{X}(M)$ . In local coordinates the metric components are given by
\beq
g(\partial_{i},\partial_{j})=g_{ij}
\eeq
Then with the additional concept of the metric one can define the non-metricity of a connection $\nabla$ according to
\beq
Q(Z,X,Y):=(\nabla_{Z}g)(X,Y)=\nabla_{Z} g(X,Y)
\eeq
which measures the failure of the connection to be covariantly conserved and has symmetry in the exchange of $X$ and $Y$. In local coordinates it is given as
\beq
Q_{ijk}:=\nabla_{i}g_{jk}
\eeq
With the above ingredients it is not difficult to show that a general affine connection $\nabla$ on $M$ with coefficients $\Gamma^{m}_{\;\;ij} $can be decomposed as
\beq
\Gamma^{m}_{\;\; ij}=\Gamma^{m\;(0)}_{\;\; ij}+N^{m}_{\;\; ij} \label{expans}
\eeq
where $\Gamma^{(0)m}_{\;\; ij}$ is the usual torsionless and metric Levi-Civita connection and 
\beq
N^{m}_{\;\; ij}=\frac{1}{2}g^{ml}(Q_{lij}-Q_{jli}-Q_{ijl})-g^{ml}(S_{lij}+S_{lji}-S_{ijl})
\eeq
which is oftentimes referred to as the distortion tensor describing the non-Riemannian features of the manifold. In addition the decomposition (\ref{expans}) is called the post-Riemannian expansion of the connection. Finally, plugging (\ref{expans}) into the definition of the curvature tensor we get the post-Riemannian expansion of the latter as
\beq
R^{m}_{\;\;jkl}=R^{m\;(0)}_{\;\;jkl}+2 \nabla^{(0)}_{[k}N^{m}_{\;\;|j|l]}+2 N^{m}_{\;\; n[k}N^{n}_{\;|j|l]} \label{Riemdec}
\eeq
where $R^{m}_{\;\;jkl}=R^{m}_{\;\;jkl}(\nabla)$ are the coefficients of the curvature tensor of the full connection $\nabla$ and  $R^{m\;(0)}_{\;\;jkl}=R^{m}_{\;\;jkl}(\nabla^{(0)})$ is the Riemann tensor associated with the Levi-Civita connection $\nabla^{(0)}$. The above expansion will be essential for the proof of our main Theorem which we present in section (\ref{sec}).

\subsection{Endowing $\mathcal{M}$ with two connections}

Given two affine connections $\nabla^{(1)}$ and $\nabla^{(2)}$, it is trivial to show that their sum is not a connection. However the convex linear combination
\beq 
\nabla=t\nabla^{(1)}+(1-t)\nabla^{(2)}\;\;, \; t \in \mathbb{R}
\eeq
with connection coefficients 
\beq
\Gamma^{i}_{\;\;\;jk}=t\Gamma^{i \; (1)}_{\;\;\;jk}+(1-t)\Gamma^{i \; (2)}_{\;\;\;jk} \label{Gamma}
\eeq
does define a new connection $\nabla$ on $\mathcal{M}$.
Note also that the difference of two affine connections always defines legal tensors, as it can be trivially shown. As a result for the above two connections, we define the difference tensor through
\beq
K(Y,X):=\nabla_{X}^{(1)}Y-\nabla_{X}^{(2)}Y \label{diften0}
\eeq
which is expressed in local coordinates as\footnote{Note the somewhat unconventional position of $X$ and $Y$ in the definition (\ref{diften0}). This is a remnant of the index placing of the connection coefficients in (\ref{concof}). }
\beq
K^{l}_{\;\;ij}:=\Gamma^{l \; (1)}_{\;\;\;ij}-\Gamma^{l \; (2)}_{\;\;\;ij} \label{diften}
\eeq
Furthermore, this tensor appears quite naturally by forming the commutator of the two connections with respect to their connection labeling index and acting it on a scalar.  Indeed, just as the torsion tensor (in components) for a single connection can be defined through its action on a $\mathcal{C}^{\infty}$ function $f$, as
\beq
(\nabla_{i}\nabla_{j}-\nabla_{j}\nabla_{i})f=- T^{l}_{\;\; ij}
 \nabla_{l}f
\eeq
Similarly, given the two connections above, taking the aforementioned commutator between them and acting it on a $\mathcal{C}^{\infty}$ scalar function we readily find
\beq
\Big( \nabla_{i}^{(1)}\nabla_{j}^{(2)}-\nabla_{i}^{(2)}\nabla_{j}^{(1)}\Big)f=-\Big(\Gamma^{l \;(1)}_{\;\;\;ji}-\Gamma^{l \; (2)}_{\;\;\;ji})\Big)\partial_{\lambda}f \equiv- K^{l}_{\;\;ji}\partial_{\lambda} f
\eeq

Given, the above expression (\ref{Riem}) of the Riemann tensor of $\nabla$, and using the relation (\ref{Gamma}) an elementary calculation reveals that
\beq
R^{m}_{\;\;ijk}(\nabla)=tR^{m}_{\;\;ijk}(\nabla^{(1)})+(1-t)R^{m}_{\;\;ijk}(\nabla^{(2)})-2t(1-t)K^{m}_{\;\;l[j}K^{l}_{\;\;|i|k]} \label{Riem2}
\eeq
 Let us stress out that, of course, the latter is not just a linear combination of the Riemann tensors of the two connections. It is  interesting however to note that the additional piece, which is tensorial as it should, is built entirely out of the difference tensor of the two connections as defined by ($\ref{diften}$). 

 Contracting twice in the usual manner the above Riemann tensor we obtain for the Ricci scalar\footnote{We define the Ricci scalar as usual, namely $Ric:=R^{m}_{\;\;imj}g^{ij}$}
\beq
Ric(\nabla)=t Ric(\nabla^{(1)})+(1-t)Ric(\nabla^{(2)})-t(1-t)\Big( K^{i}_{\;\;ji}K^{j}_{\;\; kl}g^{kl}-K^{ijk}K_{jki}\Big) \label{R}
\eeq
Again, the Ricci scalar of the full connection is not just a weighted sum of the Ricci scalars of the two connections but we have interesting couplings between the associated connections given in the form of quadratic invariants of the difference tensor (last two terms appearing in the parenthesis on the right-hand side of ($\ref{R}$)).

\section{Dual Connections and Statistical Manifolds}

 The concept of dual connection was first introduced by Amari \cite{amari1985differential} and later by Lauritzen  \cite{lauritzen1987statistical} and was further developed and applied by Amari \cite{amari1987differential,amari1997information,amari2000methods}. We recall its definition below.
 \begin{mydef}{Dual Connections.}
 Given a connection $\nabla$ and a metric $g$ on a manifold $\mathcal{M}$, namely given the  triplet (g,$\nabla$,$\mathcal{M}$), the  dual-connection $\nabla^{\star}$ is defined through the relation \cite{amari1987differential}
\beq
Z(g(X,Y))=g(\nabla_{Z}X,Y)+g(X,\nabla^{\star}_{Z}Y) \label{metdual}
\eeq
$\forall$ $X,Y,Z$ $\in$ $ \mathfrak{X}(M) $.
In a local coordinate system the latter reads
\beq
\partial_{i}g_{jk}=\Gamma_{kji}+\Gamma^{*}_{jki}\label{dual}
\eeq
This is the defining equation for the coefficients $\Gamma^{*}_{jki}$ of the dual connection $\nabla^{*}$. The two connections are then said to be dually coupled.
 \end{mydef}
 \textbf{Remark.} Note the in the definition above it is implicitly assumed that $\Gamma_{jki}=\Gamma_{(jk)i}$ (and same for the dual) since expression (\ref{dual}) is symmetric in $j,k$ and as such it only makes sense  if both connections are symmetric in their first pair of indices. If the connections did not have this symmetry, equation (\ref{dual}) would fail to completely define the full dual connection (i.e. it would only define the symmetric part, in the first two indices, of the dual connection coefficients and leave the antisymmetric completely unspecified). If, in addition, the two connections are torsion-free we have the following result which can be found for instance in \cite{amari1985differential,amari1987differential,lauritzen1987statistical}.
 \begin{myprop}
     If, in addition, $\nabla$ and $\nabla^{\star}$ are torsion-free then there exists a totally symmetric tensor $C_{ijk}=C_{(ijk)}$ called the cubic tensor \cite{amari1985differential,amari1987differential} such that 
       \beq
\nabla_{i}g_{jk}=C_{ijk} \;\; , \;\; \nabla_{i}^{*}g_{jk}=-C_{ijk} \;\;, \;\; with \; \;C_{ijk}=C_{(ijk)}
         \eeq
 \end{myprop}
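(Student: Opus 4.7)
The plan is to read off the candidate tensor $C_{ijk}$ directly from the non-metricity of $\nabla$, check the sign-flipped identity for $\nabla^{\star}$ using the involutive character of duality, and then extract total symmetry from the defining equation together with torsion-freeness of both connections.

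First I would expand the non-metricity as $\nabla_i g_{jk} = \partial_i g_{jk} - \Gamma_{kji} - \Gamma_{jki}$, where $\Gamma_{kji}:=g_{km}\Gamma^m_{\;\;ji}$, and substitute (\ref{dual}) into $\partial_i g_{jk}$ to obtain the compact expression
\[
\nabla_i g_{jk} \;=\; \Gamma^{*}_{jki} - \Gamma_{jki}.
\]
Applying the same manipulation but starting from the $j\leftrightarrow k$ form of (\ref{dual}), namely $\partial_i g_{jk} = \Gamma_{jki} + \Gamma^{*}_{kji}$, gives $\nabla^{\star}_i g_{jk} = \Gamma_{jki} - \Gamma^{*}_{jki} = -\nabla_i g_{jk}$, which is really the involutivity $(\nabla^{\star})^{\star}=\nabla$ in disguise. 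So setting $C_{ijk}:=\nabla_i g_{jk}$ already enforces both stated equations with opposite signs.

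It then remains to establish total symmetry of $C_{ijk}$. Symmetry in $(jk)$ is automatic from $g_{jk}=g_{kj}$. For the remaining transposition I invoke torsion-freeness: $\Gamma^l_{\;\;ab}=\Gamma^l_{\;\;ba}$ and $\Gamma^{*l}_{\;\;ab}=\Gamma^{*l}_{\;\;ba}$, which upon lowering the upper index become $\Gamma_{jki}=\Gamma_{jik}$ and $\Gamma^{*}_{jki}=\Gamma^{*}_{jik}$. Applied to $C_{ijk}=\Gamma^{*}_{jki}-\Gamma_{jki}$ this produces $C_{ijk}=C_{kji}$, i.e.\ invariance under $i\leftrightarrow k$. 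The transpositions $(jk)$ and $(ik)$ generate $S_3$, so $C$ is invariant under every permutation of its indices.

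The only step requiring real care is the very first one, because the paper's convention puts the differentiation direction in the \emph{last} lower slot of $\Gamma^k_{\;\;ji}$, so after lowering it is the \emph{last} two indices of $\Gamma_{jki}$ that become symmetric under torsion-freeness --- and this is precisely what triggers the $i\leftrightarrow k$ symmetry above. A second, more conceptual route that will in any case be useful for the later torsion-dual analogue is the mean-connection argument: adding (\ref{dual}) to its $j\leftrightarrow k$ counterpart shows that $\bar\nabla:=\tfrac12(\nabla+\nabla^{\star})$ is metric, and averaging preserves torsion-freeness, so $\bar\nabla=\nabla^{(0)}$; writing $\nabla=\nabla^{(0)}+A$ and $\nabla^{\star}=\nabla^{(0)}-A$ with $A^l_{\;\;ij}$ automatically symmetric in $(ij)$, one then evaluates $\nabla_i g_{jk}$ in two ways, once as $-A_{kji}-A_{jki}$ from the post-Riemannian expansion (\ref{Riemdec}) and once as $-2A_{jki}$ via the dual equation; equating forces $A_{kji}=A_{jki}$, which together with $(ki)$-symmetry yields total symmetry of $A$ (and hence of $C=-2A$).
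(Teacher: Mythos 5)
Your proposal is correct, and your primary route is genuinely different from (and more elementary than) the paper's. The paper proves the Proposition via the post-Riemannian expansion: it writes $\Gamma_{kji}=\Gamma^{(0)}_{kji}+N_{kji}$ and $\Gamma^{*}_{jki}=\Gamma^{(0)}_{jki}+N^{*}_{jki}$, feeds these into (\ref{dual}) to get $N_{kji}+N^{*}_{jki}=0$, sets $C_{kji}:=-2N_{kji}$, and then invokes $S_{ijk}=N_{k[ij]}=0$ together with symmetry in the first index pair to conclude total symmetry --- which is precisely your ``second, more conceptual route'' with $A$ in place of $N$; in fact your step equating $-A_{kji}-A_{jki}$ with $-2A_{jki}$ spells out the symmetrization that the paper's write-up passes over rather quickly. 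Your primary argument never introduces $\nabla^{(0)}$ or a distortion tensor at all: $C_{ijk}:=\nabla_i g_{jk}=\Gamma^{*}_{jki}-\Gamma_{jki}$ comes straight from (\ref{dual}), the sign flip for $\nabla^{\star}$ from the $j\leftrightarrow k$ instance of (\ref{dual}), and total symmetry from the two transpositions $(jk)$ (metric symmetry) and $(ik)$ (torsion-freeness acting on the last two lowered slots), which generate $S_3$. The elementary route isolates exactly which hypothesis supplies which index symmetry and keeps the proof self-contained; the paper's route buys as by-products the identifications $C=Q=-2N$ and $\tfrac12(\nabla+\nabla^{\star})=\nabla^{(0)}$, which are reused later for the torsion-dual analogue. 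Both arguments are sound, and since you supply the paper's decomposition argument as well, nothing is missing.
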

     \begin{proof}
         Even though the proof is quite standard and can be found in any work dealing with the subject, we will  prove this fact here with the use of post-Riemannian expansion, which as we have already mentioned, is something that is not frequently used in the mathematics literature but is quite standard in the modified gravity community.  Using the aforementioned post-Riemannian expansions both for the connection and its dual, we have
         \beq
\Gamma_{kji}=\Gamma_{kji}^{(0)}+N_{kji}
         \eeq
         \beq
\Gamma_{jki}^{*}=\Gamma_{jki}^{(0)}+N_{jki}^{*}
         \eeq
         Recall that $\Gamma_{jki}^{(0)}$ is the Levi-Civita connection satisfying $\partial_{i}g_{jk}=\Gamma_{kji}^{(0)}+\Gamma^{(0)}_{jki}$. Using this fact and substituting both of the above into the definition (\ref{dual}) it follows that
         \beq
0=N_{kji}+N_{jki}^{*}
         \eeq
meaning that the two distortions annihilate each other and, in addition, only the symmetric part in the first two indices contributes (see discussion in previous section), so we may set $C_{kji}=C_{(kj)i}:=-2N_{kji}$. Then, from torsion freeness  $S_{ijk}=N_{k[ij]}=0$, which, along with the symmetry of $C_{kij}$ in the first two indices implies that the latter is totally symmetric, viz. $C_{kji}=C_{(kji)}$. Therefore, given also the fact that $Q_{ijk}=-2N_{(jk)i}=-2N_{jki}= C_{jki}$ we find
 \beq
\Gamma_{kji}=\Gamma_{kji}^{(0)}-\frac{1}{2}Q_{kji}=\Gamma_{jki}^{(0)}-\frac{1}{2}C_{kji} \label{T1}
         \eeq
          \beq
\Gamma_{kji}^{*}=\Gamma_{kji}^{(0)}+\frac{1}{2}Q_{kji}=\Gamma_{jki}^{(0)}+\frac{1}{2}C_{kji} \label{T2}
         \eeq
         where $C_{ijk}=Q_{ijk}=-2 N_{ijk}$ is a rank-3 tensor that is totally symmetric in its indices. This tensor, has been given various names. It was dubbed the "skewness" tensor by Lauritzen \cite{lauritzen1987statistical} and the "cubic" tensor by Amari \cite{amari1985differential,amari1987differential}. here we see that this is nothing else than (a totally symmetric) non-metricity. It is rather surprising that this fact is seldom mentioned in any of the relevant works in the mathematics literature.  It also immediately follows that
         \beq
\nabla_{i}g_{jk}=C_{ijk} \;\; , \;\; \nabla_{i}^{*}g_{jk}=-C_{ijk} \;\;, \;\; with \;\;C_{ijk}=C_{(ijk)}
         \eeq
         and that the mean connection is just the Levi-Civita one:
\beq
\frac{1}{2}(\nabla+\nabla^{*})\equiv \nabla^{(0)}
\eeq  
     \end{proof}
With this last result we see that the concept of the dual connection becomes more transparent, since given one connection $\nabla$ we can always find another one $\nabla^{*}$ and by forming an appropriate combination of them produce a net connection that is metric compatible. Closely related to the dual connections concept is that of Statistical Manifolds \cite{amari1987differential} which we briefly review below.

\begin{mydef}
Statistical Manifolds. Let $g$ be a Riemannian metric and $\nabla$ a torsion free connection over the differentiable manifold $\mathcal{M}$. Then, if there exists a totally symmetric tensor such that $\nabla g=C$, the triple (g,$\nabla$,$C$,$\mathcal{M}$) is called a statistical manifold \cite{amari1985differential}. In addition, if $\nabla^{*}$ is the dual connection given by equation (\ref{metdual}), then the collection(g,$\nabla^{*}$,$C$,$\mathcal{M}$) is referred to as dual statistical manifold.

\end{mydef}

Note: Let us mention that the term "Statistical" does not mean that these notions are tied only to statistics. These are purely geometrical constructions which however were first applied in statistics \cite{amari1997information}  .  The following  typical example illustrates this connection.

\subsection{Example.}

    Consider a family of probability distributions $p=p(x,\xi)\geq0$, $\int p(x,\xi)dx=1$. Then given that $p(x,\xi)$ varies smoothly on the parameter space $\xi=(\xi_{1},\xi_{2},...,,\xi_{m})$, these probability distributions define differential manifolds.
    In particular, we have an induced metric, the so-called Fisher metric with components defined by\footnote{This metric measures 'distances' between distributions and was considered by Rao \cite{rao1945information}.}
    \beq
g_{ij}(\xi):=E_{\xi}[\partial_{i}l,\partial_{i}l ]=\int p(x,\xi)\partial_{i}l(x,\xi),\partial_{i}l(x,\xi) dx 
    \eeq
where $E_{\xi}[f ]=\int f(x,\xi)p(x,\xi)dx$ denotes the expectation value of a function $f$ and $l(x,\xi):=\ln{p(x,\xi)}$ is the so-called log-likelihood function. We have also abbreviated $\partial_{i}=\frac{\partial}{\partial \xi^{i}}$.

    Furthermore a completely symmetric non-metricity (cubic tensor) is also induced as
    \beq
C_{ijk}(\xi):= E_{\xi}[\partial_{i}l,\partial_{j}l,\partial_{k}l ]=\int p(x,\xi)\partial_{i}l(x,\xi),\partial_{i}l(x,\xi) \partial_{k}l(x,\xi)dx 
    \eeq

    As a classical example we consider the Gaussian distribution 
    \beq
    p(x,\mu,\sigma)=\frac{1}{\sqrt{2 \pi}\sigma}e^{-\frac{(x-\mu)^{2}}{2 \sigma^{2}}}
    \eeq
    where now $(\xi_{1},\xi_{2})=(\mu,\sigma) \in \mathbb{R}x\mathbb{R}^{+}$ namely a $2$-dimensional manifold is defined with coordinates $\mu$ (the mean) and $\sigma$ (the variant). For this case we easily compute $g_{11}=\frac{1}{\sigma^{2}}$, \; $g_{22}=\frac{1}{2 \sigma^{2}}$ and $g_{12}=g_{21}=0$ and therefore the Fisher metric \cite{fisher1922mathematical} reads
 \beq
    g_{ij}=\frac{1}{\sigma^{2}}diag(1,2)
    \eeq

    Furthermore, the components of the cubic tensor are in this case
    \beq
C_{111}=C_{122}=C_{212}=C_{221}=0 \nonumber
    \eeq
     \beq
C_{112}=C_{121}=C_{211}=\frac{2}{\sigma^{3}} \;,\;\; C_{222}=\frac{8}{\sigma^{3}}\nonumber
    \eeq
We see therefore how such a dualistic structure arises from statistics.

\subsection{The $\alpha$-Connections}

Let us now review the notion of the so-called $\alpha$-connections \cite{amari1987differential}. The starting point is to observe that since $C_{ijk}$ are the components of a totally symmetric tensor, the quantities $\alpha C_{ijk}$, $\alpha \in \mathbb{R}$ also define the components of a totally symmetric tensor. As a result, for any pair of conjugate connections ($\nabla, \nabla^{\star}$), the one-parameter family of connections $ \{ \nabla^{(\alpha)}\}_{\alpha \in \mathbb{R}}$ defines a pair $(\nabla^{(\alpha)},\nabla^{(-\alpha)})$ which is also dually coupled to the metric. In components we have the following monoparametric  family of connections
\beq
\nabla^{(\alpha)}:=\nabla^{(0)}-\frac{\alpha}{2}C
\eeq
with coefficients\footnote{Note that $\Gamma_{kij}=g_{kl}\Gamma^{l}_{\;\; ij}$}
\beq
\Gamma_{kij}^{(\alpha)}=\Gamma_{kij}^{(0)}-\frac{\alpha}{2}C_{kij}
\eeq
\beq
\Gamma_{kij}^{(-\alpha)}=\Gamma_{kij}^{(0)}+\frac{\alpha}{2}C_{kij}
\eeq
Note also that $\nabla^{(0)}$ is the Levi-Civita connection and that $\Gamma_{kij}^{(\alpha)}$ can be computed directly by $\Gamma_{kij}$ and $\Gamma_{kij}^{\star}$ from  the formula
\beq
\Gamma_{kij}^{(\alpha)}=\frac{(1+\alpha)}{2}\Gamma_{kij}+\frac{(1-\alpha)}{2}\Gamma_{kij}^{\star}
\eeq
or in a coordinate free manner
\beq
\nabla^{(\alpha)}=\frac{(1+\alpha)}{2}\nabla+\frac{(1-\alpha)}{2}\nabla^{\star}
\eeq
which is trivially proved by using the above expressions and the relation (\ref{T1}) in order to eliminate the Levi-Civita connection and the cubic tensor in favor of the full connection and its dual.. Furthermore, it obviously holds that
\beq
\frac{1}{2}(\nabla^{(\alpha)}+\nabla^{(-\alpha)})\equiv \nabla^{(0)} \label{alpha}
\eeq
confirming the fact that these connections are dually coupled to the metric as stated earlier.

\subsection{On Possible Generalizations and Equivalent Interpretations}
As we already mentioned the dual connection definition (\ref{dual}) can be generalized in many ways. One is to allow for torsion. Another one is to release the assumption that the connection coefficients  are symmetric in respect to their first two indices. As we shall show bellow another generalization consists in extending the dual connection to a one-parameter space of dual connections. Our starting point is the fact that the convex linear combination
\beq
t\nabla +(1-t)\nabla^{*} \;\; t \in \mathbb{R}
\eeq
is a connection. We can therefore generalize  the dual connection definition to
\beq
\partial_{i}g_{jk}=2 t \Gamma_{(kj)i}+2(1-t)\Gamma_{(kj)i}^{*}
\eeq
For the special case $t=1/2$ and $\Gamma_{(ij)k}=\Gamma_{ijk}$ this coincides with (\ref{metdual}). It is then trivial to see  that 
\beq
\nabla^{(0)}=t\nabla +(1-t)\nabla^{*} \;\; t \in \mathbb{R}
\eeq
that is the convex linear combination gives the Levi-Civita connection. However, with this generalization the basic property $(\nabla^{*})^{*}=\nabla$ is lost.  In fact it is not difficult to show that the condition $(\nabla^{*})^{*}=\nabla$ fixes $t=1/2$ . As a result, from the above parametric family only the member with $t=1/2$ has the property that the double dual connection gives back $\nabla$ (i.e. duality is involutive). Consequently, we see  that the prototype definition (\ref{dual}) of the dual is indeed unique being the only one with this properly. As a final remark on the concept of dual (non-metric) connection as given by eq. (\ref{dual}) let us also present an another point of view that yields to the same result. To this end, let us consider two affine connections $\nabla^{(1)}$ and $\nabla^{(2)}$ on $\mathcal{M}$.  For all $X,Y,Z$ $\in$ $\mathfrak{X}(M)$ we define the $(0,3)$ tensor field
\beq
W(Z,X,Y):=\frac{1}{2}(\nabla^{(1)}_{Z}g+\nabla^{(2)}_{Z}g)(X,Y)= \frac{1}{2}(\nabla_{Z}^{(1)}+\nabla_{Z}^{(2)})g(X,Y) \label{mutnon}
\eeq
which we may call the $mutual$ $no-metricity$ tensor. In local coordinates it reads
\beq
W_{ijk}=\partial_{i}g_{jk}-\Gamma^{(1)}_{(jk)i}-\Gamma^{(2)}_{(jk)i}
\eeq
Then, it is obvious that given $\nabla^{1}=\nabla$ the demand of a  vanishing mutual non-metricity (i.e. $W_{ijk}=0$) implies (setting also $\nabla^{(2)}=\nabla^{\star}$)
\beq
\partial_{i}g_{jk}= \Gamma_{(kj)i}+\Gamma_{(kj)i}^{*}
\eeq
which is exactly the defining relation of the dual connection. As a result one can interpret the dual (non-metric)  connection $\nabla^{\star}$, as the one for which the mutual non-metricity vanishes. So, condition (\ref{dual}) is equivalent to the statement
\beq
W(Z,X,Y)=\frac{1}{2}(\nabla_{Z}g+\nabla^{\star}_{Z}g)(X,Y)\equiv0 \label{mutnon2}
\eeq
which can then be seen as the definition of the dual connection. 

Let us also mention that one can also relax the torsionlessness assumption. Such dual connections admitting torsion are also very interesting since they are encountered in quantum statistical models \cite{amari2000methods}\footnote{In particular, the non-commutativity of quantum operators generates a non-vanishing torsion.}. However our intention here is not to go over this direction but rather develop a dual connection  analogue for connection that are metric but admit torsion. This can be seen as the complementary subspace of the space of connections that are dual in both senses. We make our idea more precise in the following section.

\section{The dual analogue of two metric but torsionful connections}\label{sec}

 The dual connection concept we discussed earlier has the remarkable property to keep the inner product intact under parallel transport regardless of the fact that both connections have a non-vanishing non-metricity. To quote Amari's words "the two connections cooperate to keep the inner product unchanged under parallel transport". 

 In this sense it would be interesting to have a similar concept of a metric but torsionful connection and its corresponding dual in such a way so as to preserve vectors from twisting under parallel transport. To make our idea more clear let us recall the effect of torsion with a simple geometrical illustration.

	Recall that in the presence of torsion we cannot form small parallelograms by parallel transportation of one vector to the direction of the other and vice versa. The end result is a pentagon.  To see this  we borrow the following illustration from \cite{iosifidis2019metric}.   On a manifold endowed with an affine connection $\nabla$ consider two curves with parametrizations $\mathcal{C}:x^{i}=x^{i}(\lambda)$ and $\mathcal{\tilde{C}}:\tilde{x}^{i}=\tilde{x}^{i}(\lambda)$ and associated tangent vectors
	\begin{equation}
	u^{i}=\frac{dx^{i}}{d\lambda}
\;\; and \;\; 
	\tilde{u}^{i}=\frac{d\tilde{x}^{i}}{d\lambda}
	\end{equation}
	to each respectively. Now, let us $\delta \lambda$-displace $u^{i}$ along $\mathcal{\tilde{C}}$ to obtain $u^{'i}$ which to first order in $\delta \lambda$ is given by
	\begin{equation}
	u^{'i}=u^{i}+(\partial_{j}u^{i})\frac{d\tilde{x}^{j}}{d \lambda}\delta \lambda +\mathcal{O}(\delta \lambda^{2} ) \label{toru}
	\end{equation}
	but since $u^{i}$ is parallely transported along $\mathcal{\tilde{C}}$, it holds that
	\begin{equation}
	\frac{d\tilde{x}^{j}}{d\lambda}\nabla_{j}u^{i}=0= \frac{d\tilde{x}^{j}}{d\lambda}\partial_{j}u^{i}+\Gamma^{i}_{\;\;\;jk}\frac{d\tilde{x}^{k}}{d\lambda}u^{j} 
	\end{equation}
	which when combined with $(\ref{toru})$ results in
	\beq
	u^{'i}=u^{i}-\Gamma^{i}_{\;\;\;j k}u^{j}\tilde{u}^{k}\delta\lambda
	\eeq
	Doing the same job but now  displacing   $\tilde{u}^{i}$ along $\mathcal{C}$, we get
	\beq
	\tilde{u}^{'i}=\tilde{u}^{i}-\Gamma^{i}_{\;\;\;jk}\tilde{u}^{j}u^{k}\delta\lambda +\mathcal{O}(\delta \lambda^{2} ) = \tilde{u}^{i}-\Gamma^{i}_{\;\;\;jk}\tilde{u}^{j}u^{k}\delta\lambda +\mathcal{O}(\delta \lambda^{2} ) 
	\eeq
	Subtracting the latter two, it follows that
	\beq
	(\tilde{u}^{i}+u^{'i})-(u^{i}+\tilde{u}^{'i})= T^{i}_{\;\; kj}\tilde{u}^{j}u^{k}\delta\lambda
	\eeq
	Notice now that for the infinitesimal parallelogram to exist, the vectors $(\tilde{u}^{i}+u^{'i})$ and $(u^{i}+\tilde{u}^{'i})$ should be identical and as it is clear from the above, this is not true in the presence of torsion. Defining the vector that shows this deviation as $V^{i}\delta \lambda=(\tilde{u}^{i}+u^{'i})-(u^{i}+\tilde{u}^{'i})$ the latter can also be written as\footnote{This only holds true for small displacements in the directions of $\tilde{u}^{i}$ and $u^{i}$ which themselves are computed at the starting point of the path.}
	\beq
	V^{i}=T^{i}_{\;\; kj}\tilde{u}^{j}u^{k}
	\eeq
	which is the vector that shows how much the parallelogram has been deformed. 

 Now consider the same setting but apart from $\nabla$ we introduce another connection $\nabla^{\star}$. We parallel transport $u^{i}$ infinitesimally  along $\mathcal{\tilde{C}}$ using the connection $\nabla$ to obtain $u^{'i}$. Then we parallel transport $\tilde{u}^{i}$ infinitesimally  along $\mathcal{C}$  but now using the connection $\nabla^{\star}$ instead, to end up with $\tilde{u}^{'i}$. Similarly to the above computation, finally we find for the deviation vector:
 \beq
(\tilde{u}^{i}+u^{'i})-(u^{i}+\tilde{u}^{'i})=\Big( \Gamma^{i}_{\;\; jk}-\Gamma^{i \; \star}_{\;\; kj}\Big) \tilde{u}^{j}u^{k}
 \eeq
	Observe now that if the connection coefficients of the star connection satisfy
 \beq
\Gamma^{i \; \star}_{\;\; kj}=\Gamma^{i}_{\;\; jk} \label{dutor0}
 \eeq
they preserve the breaking of the parallelogram! Namely, the two connections, cooperate in order to preserve the infinitesimal parallelograms from breaking to pentagons, even though the space has torsion! In analogy with the introduction of the dual connection which helps in preserving inner products, we have now come to the notion of a \underline{ Torsion Dual Connection} which may be used in order to preserve the formation of infinitesimal parallelograms. Then, equation (\ref{dutor0}) is the defining relation of this dual torsion connection. Collecting everything we may state the following.
\begin{cor}
Consider a (torsionful) connection $\nabla$ on $\mathcal{M}$. Introduce now another (torsionful) connection $\nabla^{\star}$ on $M$  with coefficients given by (\ref{dutor0}). Then given two vectors, the parallel transport of one in the direction of the other using $\nabla$ and the same for the other but now using $\nabla^{\star}$ preserves the vectors from twisting. 
\end{cor}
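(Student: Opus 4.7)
The plan is essentially to repeat the geometric calculation already carried out in the paragraphs preceding the corollary, but with the crucial asymmetry that the second leg of the would-be parallelogram is transported with $\nabla^{\star}$ rather than with $\nabla$. Since the single-connection deviation vector has already been derived in full, the corollary reduces to checking that the defining relation (\ref{dutor0}) makes the analogous deviation vector vanish identically.

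Concretely, I would reintroduce the same infinitesimal setup: two smooth curves $\mathcal{C}$ and $\tilde{\mathcal{C}}$ meeting at a point, with tangent vectors $u^{i}$ and $\tilde{u}^{i}$. First, parallel transporting $u^{i}$ along $\tilde{\mathcal{C}}$ with the connection $\nabla$ gives, to first order in $\delta\lambda$,
\[
u'^{\,i}=u^{i}-\Gamma^{i}_{\;\;jk}u^{j}\tilde{u}^{k}\delta\lambda,
\]
exactly as in the derivation above. The new ingredient is that $\tilde{u}^{i}$ is now transported along $\mathcal{C}$ using $\nabla^{\star}$, producing
\[
\tilde{u}'^{\,i}=\tilde{u}^{i}-\Gamma^{i\,\star}_{\;\;jk}\tilde{u}^{j}u^{k}\delta\lambda.
\]

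Next, I would form the closure/deviation vector $V^{i}\delta\lambda:=(\tilde{u}^{i}+u'^{\,i})-(u^{i}+\tilde{u}'^{\,i})$ and subtract the two expressions above. After a relabeling of dummy indices so that both terms are contracted against $\tilde{u}^{j}u^{k}$, one arrives at
\[
V^{i}=\bigl(\Gamma^{i}_{\;\;jk}-\Gamma^{i\,\star}_{\;\;kj}\bigr)\tilde{u}^{j}u^{k}.
\]
Imposing the defining relation (\ref{dutor0}), $\Gamma^{i\,\star}_{\;\;kj}=\Gamma^{i}_{\;\;jk}$, makes the bracket vanish identically, so $V^{i}=0$ for arbitrary $u^{i},\tilde{u}^{i}$. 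Hence the pentagon degenerates to a closed infinitesimal parallelogram, which is exactly the statement of the corollary.

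The main obstacle, such as it is, is purely bookkeeping: one must keep careful track of the positioning of the lower indices $j,k$ when swapping the roles of $\nabla$ and $\nabla^{\star}$ in the two transport steps, because the star contribution enters with these indices exchanged relative to the $\nabla$ contribution. Once that index matching is done honestly, no further geometric input is required and the corollary follows as a direct specialization of the deviation-vector computation already displayed in the text.
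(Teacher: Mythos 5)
Your proposal is correct and follows exactly the route the paper itself takes: redo the infinitesimal transport computation with $\nabla$ on one leg and $\nabla^{\star}$ on the other, obtain the deviation vector $V^{i}=\bigl(\Gamma^{i}_{\;\;jk}-\Gamma^{i\,\star}_{\;\;kj}\bigr)\tilde{u}^{j}u^{k}$, and observe that the defining relation (\ref{dutor0}) makes it vanish identically. No gaps; the index bookkeeping you flag is handled consistently with the text.
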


\textbf{Remark.} A quite remarkable consequence of the inclusion of a second connection is the following. Consider the collection ($g,\nabla^{(1)},\nabla^{(2)}, \mathcal{M}$) where the connections $\nabla^{(1)}$ and  $\nabla^{(2)}$ are  both torsion-free but totally independent from each other. Furthermore, consider the above situation with the two curves $\mathcal{C}$ and $\mathcal{\tilde{C}}$ and the associated tangent vectors. We now transport $u^{i}$ using $\nabla^{(1)}$ and $\tilde{u}^{i}$ using $\nabla^{(2)}$. Then, we find the non-vanishing deviation
\beq
(\tilde{u}^{i}+u^{'i})-(u^{i}+\tilde{u}^{'i})=\Big( \Gamma^{i\; (1)}_{\;\; jk}-\Gamma^{i \;\; (2)}_{\;\; kj}\Big) \tilde{u}^{j}u^{k} \neq 0
\eeq
 meaning that infinitesimal parallelograms break to pentagons even though the space if free of torsion! We see therefore that it is possible to have vectors twisted under parallel transport even in the absence of torsion, given that we introduce a second connection on the manifold. From this observation it stands to reason to call the tensor\footnote{This tensor also naturally appears by acting the operator $\nabla^{(1)}_{i}\nabla_{j}^{(2)}-\nabla^{(2)}_{j}\nabla_{i}^{(1)}$ on a scalar function. In particular, it holds that $\Big(\nabla^{(1)}_{i}\nabla_{j}^{(2)}-\nabla^{(2)}_{j}\nabla_{i}^{(1)}\Big)f=-M^{l}_{\;\;ji}\partial_{l}f$. } 
 \beq
 M^{i}_{\;\; jk}:=\Gamma^{i\; (1)}_{\;\; jk}-\Gamma^{i \;\; (2)}_{\;\; kj}
 \eeq
 the mutual  torsion tensor (see (\cite{puechmorel2020lifting})), which we may also express in a coordinate-free fashion as
 \beq
 M(Y,X)=\nabla^{(1)}_{X}Y-\nabla^{(2)}_{Y}X-[X,Y] \label{muttor}
 \eeq
 This tensor is of course defined for all connections (including torsion). Note that for the special case of torsion-free connections, this tensor coincides with the difference tensor given by eq. (\ref{diften})\footnote{Quick Proof: Let $T^{(1)}(X,Y)$ and $T^{(2)}(X,Y)$ be the torsion tensors of the two connections. Adding a zero to the definition of the mutual torsion and regrouping the remaining terms we readily get $M(Y,X)=\nabla_{X}^{(1)}Y-\nabla_{X}^{(2)}Y+\nabla_{X}^{(2)}Y-\nabla^{(2)}_{Y}X-[X,Y]=K(Y,X)+T^{(2)}(X,Y)=\nabla^{(1)}_{X}Y-\nabla_{Y}^{(1)}X+\nabla_{Y}^{(1)}X-\nabla^{(2)}_{Y}X-[X,Y]=T^{(1)}(X,Y)+K(X,Y)$ from which it follows immediately that M coincides with K for vanishing torsions and, furthermore, they are also symmetric in this case $M(X,Y)=M(Y,X)=K(X,Y)$.}. Therefore, even for torsion-free connections, their mutual torsion may not be zero!
 
 After this side-note let us go back to the concept of dual torsion connection we introduced earlier, formally define it and gather some basic properties of the latter.
 
 \begin{mydef}
 Let $g$ be a Riemannian metric and $\nabla$ a torsionful, and in general non-metric, affine connection over the differential manifold $\mathcal{M}$. We define a second connection on $\mathcal{M},$ the  \underline{torsion-dual connection} of $\nabla$ (and denoted with $\nabla^{\star}$) as
 \beq
 \nabla_{X}Y-\nabla_{Y}^{\star}X-[X,Y]=0 \label{tordu}
 \eeq
 for all $X,Y$ in $ \mathfrak{X}(M)$. Essentially the torsion dual connection is specified by the demand of vanishing mutual torsion.
 In local coordinates  it takes the form
  \beq
\Gamma^{i \; \star}_{\;\; kj}=\Gamma^{i}_{\;\; jk} \label{dutor}
 \eeq
 
 Then the geometric object formed by parallel transporting two tangent vectors one in the direction of the other, is always a parallelogram given that one of the vectors is transported with respect to $\nabla$ and the other with respect to $\nabla^{\star}$. Note that unlike (\ref{dual}) the dual connection here is defined without the need of any metric and furthermore the above equation specifies all the components of the dual connection without having to impose more assumptions.
 \end{mydef}
 Now, there are two immediate consequences coming with the above inclusion:
 \begin{enumerate}[label=(\roman*)]
 \item If the connections $\nabla$ and $\nabla^{\star}$ are metric then the mean connection
 \beq
 \nabla^{(0)}=\frac{1}{2}(\nabla+\nabla^{\star})
 \eeq
 is the Levi-Civita connection.
     \item $(\nabla^{\star})^{\star}=\nabla$
     
 \end{enumerate}
 \begin{proof}
     Even though the proofs of the these two statements are self-evident we shall include them here for completeness. We chose a local coordinate basis and work with the connection coefficients. Regarding the first statement, we have
     \beq
    \frac{1}{2}( \Gamma^{k}_{\;\; ij}+\Gamma^{k}_{\;\; ij}*)= \frac{1}{2}( \Gamma^{k}_{\;\; ij}+\Gamma^{k}_{\;\; ji})=\Gamma^{k}_{\;\; (ij)}
     \eeq
     where we have used the defining relation (\ref{tordu}) for the torsion dual connection. The above equation implies that the torsion of the mean connection is zero. Then if the two connections are also metric-compatible, the mean connection will also be compatible with the metric and  given the uniqueness of the torsion free and metric Levi-Civita connection we conclude that the former must coincide with the latter. As a result
     \beq
\frac{1}{2}(\nabla+\nabla^{\star})\equiv  \nabla^{(0)}
 \eeq
 \end{proof}
 
 \begin{proof}
     The second statement is also self-evident since, if we see the star as an operation that maps $*:$ $\Gamma^{k}_{\;\;ij}\mapsto \Gamma^{k}_{\;\;ji}$, we easily compute
     \beq
    ( \Gamma^{k \; \star}_{\;\; ij})^{\star}=\Gamma^{k \; \star}_{\;\; ji}=\Gamma^{k}_{\;\; ij}
     \eeq
     that is
     \beq
     (\nabla^{\star})^{\star}=\nabla
     \eeq
     
 \end{proof}
 
 Continuing, we also have the following.
 
 \begin{myprop}
     Let $T(X,Y)$ and $T^{*}(X,Y)$ be the torsion corresponding to the affine connection $\nabla$ and its torsion dual $\nabla^{*}$ respectively. Then,it holds that
     \beq
     T(X,Y)+T^{*}(X,Y)=0
     \eeq
 \end{myprop}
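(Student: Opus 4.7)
The plan is to argue directly from the defining relation of the torsion-dual connection. The cleanest route is to work in local coordinates, since the defining equation (\ref{dutor}) simply states that the star operation exchanges the lower two indices of the connection coefficients, $\Gamma^{i\;\star}_{\;\;kj}=\Gamma^{i}_{\;\;jk}$. I would first write down the components of the two torsion tensors from the general formula $T^{m}_{\;\;ij}=\Gamma^{m}_{\;\;ji}-\Gamma^{m}_{\;\;ij}$ applied to $\nabla$ and to $\nabla^{\star}$, and then substitute the defining relation into the second expression.

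Explicitly, one expects $T^{\star m}_{\;\;ij}=\Gamma^{m\;\star}_{\;\;ji}-\Gamma^{m\;\star}_{\;\;ij}=\Gamma^{m}_{\;\;ij}-\Gamma^{m}_{\;\;ji}=-T^{m}_{\;\;ij}$, which is exactly the claim in components. Adding the two torsions gives zero, and since the identity is a tensor identity valid in any chart, it holds globally.

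For completeness I would also sketch a coordinate-free derivation, which is purely algebraic and does not require more than the definition (\ref{tordu}) and the skew-symmetry of the Lie bracket. From $\nabla_{X}Y=\nabla^{\star}_{Y}X+[X,Y]$ and, by symmetry in the roles of $X$ and $Y$, $\nabla_{Y}X=\nabla^{\star}_{X}Y-[X,Y]$, one substitutes into the definition of $T(X,Y)=\nabla_{X}Y-\nabla_{Y}X-[X,Y]$ to obtain
\begin{equation}
T(X,Y)=\nabla^{\star}_{Y}X-\nabla^{\star}_{X}Y+[X,Y]=-\bigl(\nabla^{\star}_{X}Y-\nabla^{\star}_{Y}X-[X,Y]\bigr)=-T^{\star}(X,Y),
\end{equation}
which is the desired identity.

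There is no real obstacle here; the only subtlety worth flagging is that the statement is an immediate consequence of the fact that the star operation, on connection coefficients, is precisely a transposition of the two lower indices, and torsion is the antisymmetric part (times two) of that pair of indices. In other words, the identity $T+T^{\star}=0$ is structurally equivalent to the definition of the torsion-dual connection itself, and one should emphasize this in the write-up so the reader sees that the proposition is essentially a restatement of (\ref{dutor}).
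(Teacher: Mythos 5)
Your proposal is correct and follows essentially the same route as the paper: both arguments amount to applying the defining relation (\ref{tordu}) twice, once with arguments $(X,Y)$ and once with $(Y,X)$, the only cosmetic difference being that the paper regroups the sum $T+T^{\star}$ into two vanishing parentheses while you substitute the relation into $T$ to obtain $-T^{\star}$ directly. The coordinate computation you add is an equally valid restatement of the same fact.
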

 \begin{proof}
 This is shown  trivially by recalling the definition of the torsion tensor and the torsion dual connection. By direct calculation we have
 \begin{gather}
     T(X,Y)+T^{*}(X,Y)=\nabla_{X}Y-\nabla_{Y}X-[X,Y]+\nabla_{X}^{*}Y-\nabla_{Y}^{*}X-[X,Y]=\nonumber \\
     =\Big( \nabla_{X}Y-\nabla_{Y}^{*}X-[X,Y]\Big) -\Big( \nabla_{Y}X-\nabla_{X}^{*}Y-[Y,X]\Big)=0
 \end{gather}
 where we have used the fact that the two parentheses in the second line both vanish by virtue of (\ref{tordu}).
 \end{proof}
 Note that in local coordinates the last equality takes the form
 \beq
 S_{ijk}+S^{*}_{jik}=0 \label{SS}
 \eeq
 We can double-check the validity of the lat one by performing a post-Riemannian expansion of both connections in the defining equation (\ref{tordu}). Indeed, employing such expansion, the Levi-Civita parts cancel out and one is left with
 \beq
 N_{ijk}=N^{*}_{ikj}
 \eeq
 Then, taking the antisymmetric part in $jk$ and using the fact that $S_{ijk}=N_{k[ij]}$ (and the corresponding one for the torsion dual connection) we arrive at (\ref{SS}).

  In analogy with expressions (\ref{T1}) and (\ref{T2}) here we also have similar relations given some specific conditions. To be more precise we have the following.

  \subsection{The Theorems}
  
  \begin{theorem}
   Consider the quadruple  ($g,\nabla,\nabla^{\star},\mathcal{M}$) where $\nabla$ is an affine connection and $\nabla^{\star}$ its torsion dual as given by the defining relation (\ref{tordu}). If $\nabla$ is metric and $N_{ijk}$ (the distortion tensor of $\nabla$) is antisymmetric in the last pair of indices then there exists a 3-form (i.e. a totally antisymmetric tensor of rank-3), call it A, such that
   \beq
\Gamma_{kij}=\Gamma_{kij}^{(0)}+A_{kij} \label{A1}
         \eeq
          \beq
\Gamma_{kij}^{*}=\Gamma_{kij}^{(0)}-A_{kij} \label{A2}
         \eeq
   where $\Gamma_{jki}^{(0)}$ is the Levi-Civita connection and $A_{ijk}=A_{[ijk]}$.
  \end{theorem}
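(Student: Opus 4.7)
The plan is to run a post-Riemannian expansion on both $\nabla$ and its torsion dual and show that, under the two hypotheses, the distortion tensor $N_{ijk}$ of $\nabla$ is totally antisymmetric; setting $A:=N$ then does the job.

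First I would write $\Gamma_{kij}=\Gamma^{(0)}_{kij}+N_{kij}$ and exploit the metric compatibility of $\nabla$. The identity $\partial_i g_{jk}=\Gamma_{kji}+\Gamma_{jki}$ follows from $\nabla g=0$, and the same identity holds for $\Gamma^{(0)}$; subtracting gives $N_{kji}+N_{jki}=0$, i.e.\ antisymmetry of $N$ in its first pair of indices. Combining this with the hypothesis that $N$ is antisymmetric in its last pair, the elementary fact that any three-index tensor antisymmetric in the first adjacent pair and in the last adjacent pair is totally antisymmetric---one checks $N_{ijk}=N_{jki}=N_{kij}$ under cyclic permutations and that each pairwise swap flips the sign---shows that $N$ is a 3-form. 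Setting $A_{ijk}:=N_{ijk}$ yields (\ref{A1}) at once.

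For the second identity I would use the defining relation of the torsion dual in lowered form, $\Gamma^{*}_{kij}=\Gamma_{kji}$, obtained from (\ref{dutor}) by contracting the upper index with $g$. Substituting the post-Riemannian expansion on the right, using the symmetry $\Gamma^{(0)}_{kji}=\Gamma^{(0)}_{kij}$ of the Levi-Civita connection in its last pair, and then flipping the sign with the antisymmetry $N_{kji}=-N_{kij}=-A_{kij}$ just established, one obtains (\ref{A2}). The only real content of the argument is the combinatorial step that fuses the two adjacent pairwise antisymmetries into full antisymmetry; everything else is a short post-Riemannian substitution. It is worth flagging that the hypothesis on $N$ is strictly needed here---metric compatibility alone delivers only antisymmetry in the first pair, and one sees from the explicit formula for $N$ in terms of the torsion $S$ (after setting $Q=0$) that antisymmetry in the last pair is an independent algebraic constraint on the torsion, not a consequence of $\nabla g=0$.
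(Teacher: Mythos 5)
Your proposal is correct and follows essentially the same route as the paper's own proof: metric compatibility gives antisymmetry of $N$ in its first pair, the hypothesis gives antisymmetry in the last pair, the two adjacent antisymmetries combine into total antisymmetry so $A:=N$ is a $3$-form, and the post-Riemannian expansion of the duality relation $\Gamma^{*}_{kij}=\Gamma_{kji}$ (with the Levi-Civita parts cancelling by their symmetry in the last two indices) yields $N^{*}_{kij}=N_{kji}=-A_{kij}$. Your closing observation that the antisymmetry of $N$ in the last pair is an independent hypothesis not implied by $\nabla g=0$ is also accurate.
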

 \begin{proof}
Since by hypothesis $\nabla$ is metric we have that $\nabla_{i}g_{jk}=Q_{ijk}=-2 N_{(jk)i}=0$ implying that $N_{jki}=-N_{kji}$, namely the distortion of $\nabla$ is antisymmetric in the first pair of indices. Now given also that  $N_{ijk}$ is antisymmetric in its last pair of indices, that is  $N_{ijk}=-N_{ikj}$ it immediately follows that this is also antisymmetric in the exchange of first and third indices. Indeed, employing the above symmetry properties of $N$ we have 
\beq
N_{ijk}=-N_{jik}=N_{jki}=-N_{kji}
\eeq
which proves that $N$ is also anti-symmetric under the exchange of the first with the third index. As a result, $N$ is antisymmetric in any exchange of its indices and is therefore totally antisymmetric. Consequently,    there exists a 3-form $A=A_{ijk}dx^{i}\wedge dx^{j} \wedge dx^{k}$ such that
\beq
N_{ijk}\equiv A_{ijk}=A_{[ijk]}
\eeq
Substituting this into the decomposition (\ref{expans}) gives the stated result
  \beq
\Gamma_{kij}=\Gamma_{kij}^{(0)}+A_{kij} 
         \eeq
         Furthermore, employing the post-Riemannian expansions  of both $\nabla$ and $\nabla^{\star}$ in (\ref{dutor}) we get
         \beq
N_{ikj}^{\star}=N_{ijk}
         \eeq
         which upon using the above result also implies that $N_{ikj}^{\star}=A_{ijk}=-A_{ikj}$, and  substituting this back to the post-Riemannian expansion of $\nabla$ we complete the proof
          \beq
\Gamma_{kij}^{*}=\Gamma_{kij}^{(0)}-A_{kij} 
         \eeq
 \end{proof}
 \textbf{Remark 1.} It is worth mentioning that this result is the "dual analogue" of the totally symmetric tensor cubic tensor $C_{ijk}$. In a sense, the torsionful (but metric) case we have developed here is complementary to the non-metric (but torsioless) case that appears in information geometry \cite{amari1997information}. The totally symmetric tensor $C$ has now given its place to the totally antisymmetric tensor $A$ and their symmetries reflect the symmetry properties of torsion and non-metricity respectively.

 \textbf{Remark 2.} Note that the dimension of the manifold must be $n\geq 3$\footnote{Of course this is so because a totally antisymmetric tensor of rank $3$ identically vanishes in two dimensions.}. In particular, for $n=3$ we have that $A_{ijk}\propto \epsilon_{ijk}$ with $\epsilon_{ijk}$ being the totally antisymmetric Levi-Civita tensor. That is, in this case there exists a  function $f$ such that
\beq
A_{ijk}=f\epsilon_{ijk}
\eeq
 and so
   \beq
\Gamma_{kij}=\Gamma_{kij}^{(0)}+f \epsilon_{kij}\;\;,\;\;\;\Gamma_{kij}^{*}=\Gamma_{kij}^{(0)}-f \epsilon_{kij} 
         \eeq
 and consequently in this case the space of connections (and their duals) is parameterized by a single function\footnote{Note that if $f$ is allowed to be complex, then if, in particular, the latter is purely imaginary then dualization simply amounts to taking the complex conjugation of the connection.}.

\begin{mydef}
    Torsion Dual Manifold. Let $\mathcal{M}$ be a differentiable manifold of dimension $n\geq 3$. Introduce an affine connection $\nabla$ and its torsion dual $\nabla^{\star}$ by equation (\ref{tordu}). The collection $(\nabla, \nabla^{\star},\mathcal{M})$ will be called a \underline{torsional statistical manifold}.
\end{mydef}
 
Let us recall now that on a  usual statistical manifold if the two connections (see eq. (\ref{dual})) $\nabla$ and $\nabla^{*}$ are torsionfree, then if $\nabla$ is flat so is $\nabla^{*}$ and vice versa, namely
\beq
R(X,Y)Z=0 \leftrightarrow R^{*}(X,Y)Z=0
\eeq
It is then reasonable to ask if a similar situation arises in the case of torsion dual connections. To be more precise, given a torsion dual manifold, does flatness of $\nabla$ imply flatness of $\nabla^{*}$ and vice versa? As we show below such a statement does not hold true for dual torsion manifolds, in general, but given a simple constraint the two connections can share such a property. To prove this we shall first consider a more general situation, in order to be able to make contact with the non-metric dual connections of information geometry. We state and prove the following.

\begin{mylemma}
Let $\nabla$ and $\nabla^{\star}$ be two linear affine connections that are in general neither metric nor torsionfree. Then, if their distortion tensors differ only by a sign, that is, there exists a type (1,2) tensor N, such that
\beq
\nabla=\nabla^{(0)}+N
\eeq
and
\beq
\nabla^{\star}=\nabla^{(0)}-N
\eeq
and if the latter possesses a certain symmetry/antisymmetry in its first pair of indices, then there corresponding  curvature tensors of the two connections are in the relation
\beq
(R(X,Y)Z,W)=-(R^{\star}(X,Y)W,Z)+(1-(-1)^{p})\Big( \nabla_{X}^{(0)}g(N(Z,Y),W)-\nabla_{Y}^{(0)}g(N(Z,X),W) \Big)
\eeq
or  expressed in local coordinates
\beq
R_{ijkl}+R_{jikl}^{\star}=2\Big(1-(-1)^{p}\Big) \nabla_{[k}^{(0)}N_{|ij|l]}
\eeq
where $p=0,1$ with $p=0$ corresponding to symmetry(i.e. $N_{ijk}=N_{jik}$ ) and $p=1$ to anti-symmetry ($N_{ijk}=-N_{jik}$).

\end{mylemma}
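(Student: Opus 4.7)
My plan is to substitute the post-Riemannian expansion (\ref{Riemdec}) into the curvatures of both $\nabla$ and $\nabla^{\star}$ and recombine them in the index pattern prescribed by the left-hand side. Since the distortion of $\nabla$ is $N$ while that of $\nabla^{\star}$ is $-N$, the linear $\nabla^{(0)} N$ piece in the expansion flips sign between the two connections, whereas the quadratic $N\cdot N$ piece, being even in $N$, is identical. Lowering the first index with $g$ is legitimate because $\nabla^{(0)} g = 0$, so $g$ passes freely through the $\nabla^{(0)}$ derivative, giving
\begin{equation}
R_{ijkl}=R^{(0)}_{ijkl}+2\nabla^{(0)}_{[k}N_{i|j|l]}+2N_{in[k}N^{n}_{\;\;|j|l]}, \qquad R^{\star}_{ijkl}=R^{(0)}_{ijkl}-2\nabla^{(0)}_{[k}N_{i|j|l]}+2N_{in[k}N^{n}_{\;\;|j|l]}.
\end{equation}

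I would then exchange $i\leftrightarrow j$ in the second identity and add the two. The Levi-Civita pieces combine as $R^{(0)}_{ijkl}+R^{(0)}_{jikl}$ and vanish by the standard antisymmetry of the Riemann tensor in its first pair. The linear derivative pieces combine into $2\nabla^{(0)}_{[k}(N_{ij|l]}-N_{ji|l]})$; using the hypothesis $N_{ijk}=(-1)^{p}N_{jik}$ this equals $2(1-(-1)^{p})\nabla^{(0)}_{[k}N_{|ij|l]}$, which is precisely the desired right-hand side.

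The principal obstacle is to show that the leftover quadratic contribution $N_{in[k}N^{n}_{\;\;j|l]}+N_{jn[k}N^{n}_{\;\;i|l]}$ vanishes for both parities. The plan is to expand the antisymmetrizations into four bilinears of the shape $N_{abc}N^{a}_{\;\;de}$, then on one of them, say $N_{ink}N^{n}_{\;\;jl}$, I would write it as $g^{na}N_{ink}N_{ajl}$, swap the contracted dummies $a\leftrightarrow n$, and apply the hypothesised symmetry once on each factor. Each application contributes a factor $(-1)^{p}$, but they enter as a product $(-1)^{2p}=+1$, yielding the universal identity $N_{ink}N^{n}_{\;\;jl}=N^{n}_{\;\;ik}N_{jnl}$ valid for both $p=0$ and $p=1$. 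This identity cancels $N_{ink}N^{n}_{\;\;jl}$ against the fourth bilinear $-N_{jnl}N^{n}_{\;\;ik}$, and the same mechanism cancels the remaining two, so the quadratic residue drops out. Once the component identity is in hand, the coordinate-free form follows by contracting with $W^{i}Z^{j}X^{k}Y^{l}$ and recognising $g(R^{\star}(X,Y)W,Z)=R^{\star}_{jikl}W^{i}Z^{j}X^{k}Y^{l}$ after relabeling.
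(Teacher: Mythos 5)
Your proposal is correct and follows essentially the same route as the paper: substitute the post-Riemannian expansion for both curvatures, flip $i\leftrightarrow j$ in the starred one, kill the Levi-Civita part by its first-pair antisymmetry, collect the linear $\nabla^{(0)}N$ terms into the $(1-(-1)^{p})$ coefficient, and cancel the quadratic $N\cdot N$ residue by applying the first-pair (anti)symmetry twice so that $(-1)^{2p}=1$. The only (welcome) refinement over the paper's write-up is your explicit remark that lowering the index through $\nabla^{(0)}$ is justified by $\nabla^{(0)}g=0$.
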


\begin{proof}
Our starting point here is the decomposition (\ref{Riemdec}), which with all indices down reads
\beq
R_{ijkl}=R^{(0)}_{ijkl}+2 \nabla^{(0)}_{[k}N^{m}_{\;\;|ij|l]}+2 N_{in[k}N^{n}_{\;\;|j|l]} \label{e}
\eeq
This is the curvature tensor associated with the connection $\nabla$. For the torsion dual connection $\nabla^{\star}$ we simply replace $N_{ijk}=-N_{ijk}$ to get
\beq
R_{ijkl}^{\star}=R^{(0)}_{ijkl}-2 \nabla^{(0)}_{[k}N^{m}_{\;\;|ij|l]}+2 N_{in[k}N^{n}_{\;\;|j|l]}
\eeq
or exchanging $i$ and $j$,
\beq
R_{jikl}^{\star}=R^{(0)}_{jikl}-2 \nabla^{(0)}_{[k}N^{m}_{\;\;|ji|l]}+2 N_{jn[k}N^{n}_{\;\;|i|l]}
\eeq
Then adding up the latter with (\ref{e}) and also using the fact that $R^{(0)}_{ijkl}+R^{(0)}_{jikl}=0$, it follows that
\beq
R_{ijkl}+R_{jikl}^{\star}=2\Big(1-(-1)^{p}\Big) \nabla_{[k}^{(0)}N_{|ij|l]}+N_{ink}N^{n}_{\;\; jl}-N_{inl}N^{n}_{\;\; jk}+N_{jnk}N^{n}_{\;\; il}-N_{jnl}N^{n}_{\;\; ik} \label{e3}
\eeq
Now, since by hypothesis $N_{ijk}=(-1)^{p}N_{jik}$ we have that
\beq
N_{jnk}N^{n}_{\;\; il}=(-1)^{p}N_{njk}N^{n}_{\;\; il}=(-1)^{p}N^{n}_{\;\; jk}N_{nil}=(-1)^{2p}N^{n}_{\;\; jk}N_{inl}=N_{inl}N^{n}_{\;\; jk}
\eeq
as well as
\beq
N_{jnl}N^{n}_{\;\; ik}=(-1)^{p}N_{njl}N^{n}_{\;\; ik}=(-1)^{p}N_{nik}N^{n}_{\;\; jl}=(-1)^{2 p}N_{ink}N^{n}_{\;\; jl}=N_{ink}N^{n}_{\;\; jl}
\eeq
and by substituting these in (\ref{e3}) the last four terms on the right-hand side of the latter cancel out leaving us with
\beq
R_{ijkl}+R_{jikl}^{\star}=2\Big(1-(-1)^{p}\Big) \nabla_{[k}^{(0)}N_{|ij|l]} \label{Lemma}
\eeq
as stated.

\end{proof}
With this result we may now obtain our main Theorem for the Riemann tensors of a connection and its torsion dual counterpart.
\begin{theorem}
 Let $\nabla$ be a metric connection and $\nabla^{\star}$ its torsion dual as given by the defining relation (\ref{tordu}). Then, the Riemann curvature tensors of the two  connections obey the relation
 \beq
R_{ijkl}+R_{jikl}^{\star}=4 \nabla_{[k}^{(0)}N_{|ij|l]} \label{Riemrel}
\eeq
In addition, flatness of $\nabla$ does not imply flatness of $\nabla^{\star}$ but rather that the coefficients of the Riemann curvature of $\nabla^{\star}$ are given by
 \beq
R_{jikl}^{\star}=4 \nabla_{[k}^{(0)}N_{|ij|l]}
\eeq
 Equivalently, we have the association
 \beq
 R_{ijkl}^{\star}=0 \Leftrightarrow R_{ijkl}=4 \nabla_{[k}^{(0)}N_{|ij|l]}
 \eeq
 Consequently, flatness of one of  the two connection imposes flatness on the other as well iff
 \beq
 \nabla_{[k}^{(0)}N_{|ij|l]}=0
 \eeq
 
\end{theorem}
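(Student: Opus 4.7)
The strategy is to quote the preceding Lemma after verifying that we sit in its $p=1$ branch. First I would use the metric compatibility of $\nabla$: since $Q_{ijk}=-2N_{(jk)i}=0$, the distortion $N_{ijk}$ is antisymmetric in its first pair of indices, $N_{ijk}=-N_{jik}$. This is precisely the $p=1$ symmetry condition on $N$ that the Lemma requires.

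The second hypothesis of the Lemma is that the two distortions differ only by a sign, i.e.\ $\nabla^{\star}=\nabla^{(0)}-N$ or equivalently $N^{\star}=-N$. To check this I would feed the post-Riemannian expansions of $\nabla$ and $\nabla^{\star}$ into the torsion-dual defining relation (\ref{dutor}); as already noted just after Proposition~2, the Levi-Civita pieces cancel and one is left with $N^{\star}_{ikj}=N_{ijk}$, i.e.\ $N^{\star}_{abc}=N_{acb}$. Read inside the ambient setting of Theorem~1, where $N=A$ is totally antisymmetric, this collapses to $N^{\star}_{abc}=-N_{abc}$, exactly as the Lemma demands.

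With both hypotheses in place the Lemma at $p=1$ gives immediately
\beq
R_{ijkl}+R^{\star}_{jikl}\;=\;2\bigl(1-(-1)^{1}\bigr)\nabla^{(0)}_{[k} N_{|ij|l]}\;=\;4\,\nabla^{(0)}_{[k} N_{|ij|l]},
\eeq
which is (\ref{Riemrel}). The remaining statements are rearrangements of this single identity: imposing $R_{ijkl}=0$ reads off $R^{\star}_{jikl}=4\,\nabla^{(0)}_{[k}N_{|ij|l]}$; the biconditional $R^{\star}_{ijkl}=0\Leftrightarrow R_{ijkl}=4\,\nabla^{(0)}_{[k}N_{|ij|l]}$ is the mirror statement obtained via the involutive property $(\nabla^{\star})^{\star}=\nabla$, which swaps the roles of the two connections and sends $N\mapsto -N$; and simultaneous flatness of $\nabla$ and $\nabla^{\star}$ amounts precisely to the vanishing of the common source term $\nabla^{(0)}_{[k}N_{|ij|l]}$.

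The main obstacle I anticipate is making the step $N^{\star}=-N$ fully watertight from the bare hypothesis ``$\nabla$ is metric'', without silently importing the total antisymmetry of $N$ from Theorem~1. A clean self-contained route would be to show that, given $\nabla$ metric and (\ref{dutor}), the dual $\nabla^{\star}$ is also metric; then $N^{\star}$ too is antisymmetric in its first pair, and combining this with the already-derived $N^{\star}_{ikj}=N_{ijk}$ upgrades the antisymmetry of $N$ from its first pair to all pairs, forcing $N^{\star}_{abc}=N_{acb}=-N_{abc}$. Once this short index-chasing lemma is settled, the rest of the argument is entirely mechanical.
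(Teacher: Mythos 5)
Your proof follows essentially the same route as the paper's: both reduce the theorem to the preceding Lemma at $p=1$ after identifying $N^{\star}=-N$, and the concluding read-offs are identical. (Incidentally, the sign you obtain, $R_{ijkl}+R^{\star}_{jikl}=4\nabla^{(0)}_{[k}N_{|ij|l]}$, is the one consistent with the Lemma and with the theorem statement; the paper's own final display has a stray minus sign.)

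The obstacle you flag at the end is a genuine gap, and it is present in the paper's proof as well: the paper simply sets $N_{ijk}=A_{[ijk]}$, importing the conclusion of Theorem~1, whose hypotheses include not only metricity of $\nabla$ but also antisymmetry of $N$ in its \emph{last} pair of indices. From metricity alone one only gets $N_{ijk}=-N_{jik}$, and the duality relation only gives $N^{\star}_{abc}=N_{acb}$, which is not $-N_{abc}$ in general. Be aware, however, that your proposed repair does not close the gap either: it is not true that $\nabla^{\star}$ is automatically metric. Indeed, using $N^{\star}_{jki}=N_{jik}$ together with $N_{ijk}=-N_{jik}$ one computes
\begin{equation}
Q^{\star}_{ijk}=-2N^{\star}_{(jk)i}=-\bigl(N_{jik}+N_{kij}\bigr)=N_{ijk}+N_{ikj}=2N_{i(jk)},
\end{equation}
so metricity of $\nabla^{\star}$ is \emph{equivalent} to $N_{i(jk)}=0$, i.e.\ to the very antisymmetry-in-the-last-pair hypothesis of Theorem~1 that you were trying to avoid assuming. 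The honest fix is to strengthen the hypotheses of the present theorem — either add ``$N_{ijk}=-N_{ikj}$'' or, equivalently, assume both $\nabla$ and $\nabla^{\star}$ are metric — after which your index chase does upgrade $N$ to total antisymmetry, yields $N^{\star}=-N$, and the rest of your argument goes through exactly as written.
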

\begin{proof}
Of course the proof is now trivial using the result (\ref{Lemma}). Indeed, the situation here is a special case of the latter equation where $N_{ijk}=A_{ijk}=A_{[ijk]}$ and as such $N_{ijk}$ is antisymmetric in the exchange of its first pair of indices. Therefore, putting $p=1$ and $N_{ijk}=A_{ijk}$ we obtain the stated result
 \beq
R_{ijkl}-R_{jikl}^{\star}=4 \nabla_{[k}^{(0)}A_{|ij|l]}
\eeq
From this it is also clear that $R_{ijkl}=0$ implies $R_{ijkl}^{\star}=4 \nabla_{[k}^{(0)}A_{|ij|l]}$ and similarly that $R_{ijkl}^{\star}=0$ fixes $R_{ijkl}=4 \nabla_{[k}^{(0)}A_{|ij|l]}$
\end{proof}
Therefore, $\nabla$-flatness does not imply $\nabla^{\star}$-flatness nor the other way around. However, we do have a weaker condition on the Ricci scalars. In particular, we have the following.

\begin{theorem}
 Let $\nabla$ be a metric connection and $\nabla^{\star}$ its torsion dual as given by the defining relation (\ref{tordu}). Then, the Ricci scalars of the two connections are identical. Consequently if $\nabla$ is scalar flat (i.e. $Ric(\nabla)=0$) the so is $\nabla^{\star}$ and vice-versa. 
\end{theorem}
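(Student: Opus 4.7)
The plan is to leverage the relation already obtained in Theorem 2 (equivalently, Lemma 1 with $p=1$, $N=A$), contract it with the inverse metric to produce Ricci scalars on the left-hand side, and then show that the totally antisymmetric structure of $A$ forces the right-hand side to vanish. First I would note the preliminary but crucial fact that $\nabla^{\star}$ is also metric: from Theorem 1 we have $\nabla^{\star}=\nabla^{(0)}-A$ with $A_{ijk}=A_{[ijk]}$, so the non-metricity of $\nabla^{\star}$ reads $Q^{\star}_{ijk}=-2N^{\star}_{(jk)i}=2A_{(jk)i}=0$ by total antisymmetry. This ensures both Riemann tensors share the usual antisymmetry in the first pair of indices, $R^{\star}_{ijkl}=-R^{\star}_{jikl}$.

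Next, I would apply this antisymmetry to rewrite the relation (\ref{Riemrel}) from Theorem 2 in the more symmetric form
\beq
R_{ijkl}-R^{\star}_{ijkl}=4\nabla^{(0)}_{[k}A_{|ij|l]}.
\eeq
Contracting both sides with $g^{ik}g^{jl}$ gives $Ric(\nabla)-Ric(\nabla^{\star})$ on the left (by the convention $Ric=g^{ij}g^{mn}R_{nimj}$ established earlier). On the right I expand the antisymmetrization as $\frac{1}{2}(\nabla^{(0)}_{k}A_{ijl}-\nabla^{(0)}_{l}A_{ijk})$ and commute the metric through $\nabla^{(0)}$, which is legitimate because the Levi-Civita connection is metric-compatible.

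The decisive step is then to observe that each resulting term contains a full contraction of the symmetric $g^{ab}$ with two indices of the totally antisymmetric $A$. Specifically, $g^{jl}A_{ijl}=0$ and $g^{ik}A_{ijk}=0$, because $A_{ijk}=A_{[ijk]}$ is antisymmetric in every pair of indices. Hence the right-hand side vanishes identically and we conclude $Ric(\nabla)=Ric(\nabla^{\star})$, yielding the scalar-flatness equivalence as a corollary.

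I do not anticipate a real obstacle; the argument is essentially bookkeeping once the antisymmetry of $A$ is placed against the symmetry of $g^{-1}$. The only subtle point worth emphasizing in the write-up is justifying that $\nabla^{\star}$ inherits metric-compatibility from $\nabla$ via total antisymmetry of the distortion, since this is what legitimizes both the Riemann antisymmetry used to rewrite (\ref{Riemrel}) and the vanishing of $g^{-1}$-contractions of $A$. An equivalent route (which I would mention as a cross-check) is to apply the post-Riemannian expansion (\ref{Riemdec}) directly to $R^m{}_{jkl}$ and $R^{\star m}{}_{jkl}$, subtract, and use $g^{mn}A_{njm}=0$ together with $g^{jl}A_{njl}=0$ to kill the trace; this gives the cleaner intermediate identity $Ric(\nabla)-Ric(\nabla^{\star})=2\nabla^{(0)}_{m}(g^{jl}N^{m}{}_{jl})=0$, again by total antisymmetry of $A$.
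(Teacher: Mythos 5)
Your proposal is correct and follows essentially the same route as the paper's (very terse) proof: contract the relation (\ref{Riemrel}) with $g^{ik}g^{jl}$, identify the left-hand side with $Ric(\nabla)-Ric(\nabla^{\star})$, and observe that the right-hand side dies because the symmetric inverse metric is fully contracted against the totally antisymmetric $A$. You merely fill in the bookkeeping the paper omits (in particular the check that $\nabla^{\star}$ is also metric, which justifies the sign flip $R^{\star}_{jikl}=-R^{\star}_{ijkl}$), which is a welcome addition but not a different argument.
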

\begin{proof}
The proof is trivial using the result (\ref{Riemrel}). Indeed, contracting the latter with $g^{ik}g^{jl}$, using the definition of the two Ricci scalars, and the fact that $A_{ijk}$ is totally antisymmetric, we easily obtain
\beq
Ric(\nabla)=Ric(\nabla^{\star})
\eeq
which also immediately implies that if one of the Ricci scalars vanishes then so does its dual counterpart as well.
\end{proof}

\textbf{Remark.} Note that one of the fundamental Theorems of information geometry, namely the fact that flatness of $\nabla$ implies that $\nabla^{\star}$ is also flat and vice-versa, i.e. $R(X,Y)Z=0 \leftrightarrow R^{\star}(X,Y)Z=0$ , comes as a bonus from our generic relation (\ref{Lemma}). Indeed for the usual non-metric dual structure, $N_{ijk}$ is totally symmetric, meaning that $p=0$ which brings (\ref{Lemma}) to the form
 \beq
R_{ijkl}-R_{jikl}^{\star}=0
\eeq
implying that when one of $\nabla$ or $\nabla^{\star}$ is flat the other one must be flat as well.

\subsection{Generalized Dual Connection in both senses (Torsion and Non-metric Dual)}

Having developed the analogue of the torsion dual connection it is there natural to ask: Given an affine connection, is it possible to define a dual connection in both senses, that is to obey both eq. (\ref{metdual}) and (\ref{tordu}) at the same time? If we suppose that the dual connection is specified by both the aforementioned conditions, then in  local coordinates these read (see eqs (\ref{dual}) and (\ref{dutor})),
\beq
\partial_{i}g_{jk}=\Gamma_{(jk)i}+\Gamma_{(jk)i}^{\star}\label{gg}
\eeq
\beq
\Gamma_{kij}=\Gamma^{\star}_{kji}
\eeq
With a first glance it is obvious that if there are no coincidental overlaps of the above conditions, in general one would expect that imposing both at the same time will further constrain the connection $\nabla$ (and subsequently $\nabla^{\star}$). This is obvious since the first of the above equation gives $n^{2}(n+1)/2$ conditions while the second supplements us with another $n^{3}$. In other words, the torsion dual connection constraint fully specifies $\nabla^{\star}$ which will then have to be compatible with (\ref{gg}) as well. Employing a post-Riemannian expansion for both connections in the above equations we readily find the conditions
\beq
N_{(jk)i}+N_{(jk)i}^{\star}=0
\eeq
and
\beq
N_{kij}+N^{\star}_{kji}=0
\eeq
Using the latter in order to eliminate $N^{\star}$ from the former, we get
\beq
N_{i(jk)}+N_{k(ij)}=0
\eeq
which has 
\beq
N_{i(jk)}=0
\eeq
as a solution, which also implies $N_{i(jk)}^{\star}=0$. Incidentally, these last two requirements also ensure that the autoparallel curves of both $\nabla$ and $\nabla^{\star}$ coincide with the geodesic paths of the manifold.
Therefore we see that it is indeed possible to demand for a dual connection in both senses, however it goes beyond the scope of this work to study such a possibility further.

\section{On Mutual Curvature and "Dual Curvature Connection"}

In the same way we defined the mutual non-metricity and torsion of two connections $\nabla^{(1)}$, $\nabla^{(2)}$ as given by equations (\ref{mutnon}) and (\ref{muttor}) respectively, it makes sense to have an analogous definition for the curvature as well, i.e. the mutual curvature of the two connections. Indeed such definitions have appeared in the literature. In particular, the authors in \cite{puechmorel2020lifting} and \cite{calin2014geometric} have defined two  different relations for the mutual (or relative) curvature of two connections. Unfortunately, both  of these definitions are rather unsatisfactory. Let us elaborate. In \cite{puechmorel2020lifting} we find the definition of the mutual curvature of two connections to be given by
\beq
R_{\nabla_{1}\nabla_{2}}(X,Y)Z:=\nabla_{X}^{(1)}\nabla_{Y}^{(2)}Z-\nabla_{Y}^{(1)}\nabla_{X}^{(2)}Z-\nabla_{[X,Y]}^{(1)}Z \label{xdefRR}
\eeq
From this definition one can easily check that $R_{\nabla_{1}\nabla_{2}}(X,Y)Z$ is not $\mathcal{F}$-linear in $Z$ as a simple calculation reveals
\beq
R_{\nabla_{1}\nabla_{2}}(X,Y)fZ:=f R_{\nabla_{1}\nabla_{2}}(X,Y)Z +(Yf) (\nabla_{X}^{(1)}Z-\nabla_{X}^{(2)}Z)+(Xf) (\nabla_{Y}^{(2)}Z-\nabla_{Y}^{(1)}Z)\label{xdefRR}
\eeq
since the last two terms spoil $\mathcal{F}$-linearity in $Z$ the above definition is not a tensor and therefore problematic.

Another definition for the mutual curvature appears in the book \cite{calin2014geometric}, page $242$ eq. ($8.9.30$). There the authors first introduce the notion of $\alpha$-connections in the usual sense (see eq. (\ref{alpha}))
and subsequently for two connections $\nabla^{(\alpha)}$ and $\nabla^{(\beta)}$ they define the mutual (relative) curvature of these two connections, by\footnote{We also correct a misprint appearing there in passing from the first to the second line of the same equation.} 
\beq
R^{(\alpha,\beta)}(X,Y)Z:=\nabla_{X}^{(\alpha)}\nabla_{Y}^{(\beta)}Z-\nabla_{Y}^{(\beta)}\nabla_{X}^{(\alpha)}Z-\nabla_{[X,Y]}^{(\alpha)}Z \label{x2}
\eeq
where now the two connections are parametrized by $\alpha$ and $\beta$ respectively.
 This already looks problematic since it is not antisymmetric in $X,Y$. In addition it singles out the connection $\nabla^{(\alpha)}$ (as was the case for (\ref{xdefRR}) as well). The authors then go on to show that this is $\mathcal{F}$-linear in $X$ which is indeed the case. Then, the authors state (without proof) that it is also $\mathcal{F}$-linear in $Y$ and $Z$. However, even though it is $\mathcal{F}$-linear in $Z$, it fails to be $\mathcal{F}$-linear in $Y$ since a trivial calculation reveals  that\footnote{The very fact that this definition is not antisymmetric in $X,Y$ requires to check the $\mathcal{F}$-linearity in $Y$ separately. In other words $\mathcal{F}$-linearity in $Y$ is not guaranteed by the $\mathcal{F}$-linearity in $X$.} 
\beq
R^{(\alpha,\beta)}(X,fY)Z=fR^{(\alpha,\beta)}(X,Y)Z+(Xf)(\nabla_{Y}^{(\beta)}-\nabla_{Y}^{(\alpha)})Z
\eeq
and obviously the presence of the last term in the above spoils the $\mathcal{F}$-linearity in $Y$.
Therefore using either of the definitions (\ref{xdefRR}) or (\ref{x2}) one still gets a result that is not $\mathcal{F}$-linear in all its arguments and therefore not a true tensor!
 As a result we see that both of the definitions of the mutual curvature tensor that appear in the literature fail to satisfy the tensor criterion. 

We shall now give our definition for the mutual curvature tensor which as we will show, not only  is it $\mathcal{F}$-linear in all its arguments (i.e. it is a tensor) but also features some additional especially appealing characteristics. We have the following.
\begin{mydef}
    Given two connections $\nabla^{(1)}$ and $\nabla^{(2)}$ we define their \underline{mutual curvature tensor} by
\beq
R_{\nabla^{1}\nabla^{2}}(X,Y)Z:=\frac{1}{2}\Big( \nabla_{X}^{(1)}\nabla_{Y}^{(2)}Z-\nabla_{Y}^{(1)}\nabla_{X}^{(2)}Z+\nabla_{X}^{(2)}\nabla_{Y}^{(1)}Z-\nabla_{Y}^{(2)}\nabla_{X}^{(1)}Z-\nabla_{[X,Y]}^{(1)}Z-\nabla_{[X,Y]}^{(2)}Z \Big) \label{Riemfor2}
\eeq
\end{mydef}
The latter is a true tensor since it is $\mathcal{F}$-linear in all its arguments as can be easily checked. In addition to that, the above definition has a number of appealing properties. Firstly, it is clearly antisymmetric in the exchange $X \Leftrightarrow Y$ as the usual Riemann tensor of a single connection. Secondly, it is also symmetric in the exchange of the two connections  $\nabla^{(1)}\Leftrightarrow \nabla^{(2)}$ and therefore it does not discriminate in either of them (i.e. the two connections are placed on equal footing). Lastly, if the two connections coincide, the mutual curvature tensor reduces to the usual Riemann tensor of the single connection $\nabla^{(1)}=\nabla^{(2)}=\nabla$. Note that the last property was also true for the mutual non-metricity and  torsion tensors as well (recall definitions (\ref{mutnon}) and (\ref{muttor})). As a result, from many different perspectives, and most importantly for consistency reasons our definition (\ref{Riemfor2}) seems to be the appropriate definition for the notion of the mutual curvature tensor. In local coordinates it reads
\beq
R^{m}_{\nabla^{1}\nabla^{2}\; kij} = \partial_{[i}\Gamma^{m\;(2)}_{\;\;\;|k|j]}+ \partial_{[i}\Gamma^{m\;(1)}_{\;\;\;|k|j]}+\Gamma^{l \;(2)}_{\;\; k[j}\Gamma^{m\; (1)}_{\;\; |l|i]}+\Gamma^{l \;(1)}_{\;\; k[j}\Gamma^{m\; (2)}_{\;\; |l|i]}
\eeq
By adding a zero and regrouping the various terms it can be brought to the alternative form
\beq
R_{(\nabla^{1}\nabla^{2})\; kij}^{m}=\frac{1}{2}\Big(R_{(\nabla^{1})\; kij}^{m}+R_{(\nabla^{2})\; kij}^{m}\Big)+K_{\;\; k[i}^{l}K^{m}_{\;\; |l|j]}
\eeq
which now clearly manifests its tensorial nature (being the sum of three type $(1,3)$ tensors). Note again that this is indeed symmetric on the exchange of the two connections since the difference tensor appears in a quadratic combination given in the last term of the above. Let us observe that the above expression is similar to the curvature tensor of the mean connection as given by eq. (\ref{Riem2}) for $t=1/2$, however there is a crucial sign difference in the last term that is quadratic in the difference tensor.

Now, in analogy with the notions of non-metric dual and torsion dual connections as given by eqs (\ref{dual}) and (\ref{tordu}) we may define what we shall call the \underline{"curvature dual connection"}. To arrive at this notion let us remark that both the non-metric and torsion dual connections are defined by imposing a vanishing mutual non-metricity and vanishing mutual torsion respectively (see eqns (\ref{mutnon2}) and (\ref{tordu})). In the same we we may define the notion of a curvature dual connection by demanding a vanishing mutual curvature tensor. With the above considerations we are led to the following definition.

\begin{mydef}
    Let $\nabla$ be an affine connection on $\mathcal{M}$. The \underline{"Curvature Dual Connection"}, denoted by $\nabla^{\star}$, is defined as that connection for which the mutual curvature tensor formed by  the two connections identically vanishes, i.e.
    \beq
R_{\nabla \nabla^{*}}(X,Y)Z\equiv 0
    \eeq
    \end{mydef}
From (\ref{Riemfor2}) for $\nabla^{(1)}=\nabla$ and $\nabla^{(2)}=\nabla^{*}$ the above constraint reads
\beq
\nabla_{X}\nabla_{Y}^{*}Z-\nabla_{Y}\nabla_{X}^{*}Z+\nabla_{X}^{*}\nabla_{Y}Z-\nabla_{Y}^{*}\nabla_{X}Z-\nabla_{[X,Y]}Z-\nabla_{[X,Y]}^{*}Z =0
\eeq
which is the defining relation for the curvature dual connection $\nabla^{*}$. Let us remark that unlike the cases of non-metric and torsion dual connections (eqns (\ref{dual}) and (\ref{tordu}) respectively), here the expression of $\nabla^{*}$ in terms of $\nabla$ is not a simple algebraic one but rather it is a partial differential equation! This is seen more clearly when expressed  in local coordinates, in which the latter equation assumes the form
\beq
\partial_{[i}\Gamma^{m\;\star}_{\;\;\;|k|j]}+ \partial_{[i}\Gamma^{m}_{\;\;\;|k|j]}+\Gamma^{l \;\star}_{\;\; k[j}\Gamma^{m}_{\;\; |l|i]}+\Gamma^{l }_{\;\; k[j}\Gamma^{m\; \star}_{\;\; |l|i]}=0 \label{curvdu}
\eeq
Then, given $n^{3}$ functions $\Gamma^{k}_{\;\;ij}$ the above is a set of partial differential equations for the functions $\Gamma^{k \; *}_{\;\;ij}$. It would be quit interesting to find the solution to (\ref{curvdu}) even for some simple cases. Of course in general such a task is highly non-trivial. Even of greatest interest would then be to see how such curvature dual connections appear in statistics or even in physical situations.

\textbf{Remark.} Relation (\ref{Riemfor2}) also shows that even if both connections $\nabla^{(i)}$,$i=1,2$ are flat, they can still exhibit a non-vanishing mutual curvature, namely with respect to the latter the manifold is not flat!

\section{Conclusions}
We have extended (and developed) the dual connection analogue of non-metric and symmetric connections to that of metric but torsionful connections. In particular, after briefly reviewing  the basic results for the non-metric (and torsion-free) dual connection, its geometrical importance and its applications in statistics, we extended this notion to a metric but torsionful dual connection. To be more precise, we defined the notion of a \underline{Torsion Dual Connection} as follows. Given an affine connection $\nabla$ on a manifold, its  torsion dual is defined as the one connection (denoted by $\nabla^{\star}$) which when combined with $\nabla$ preserves the formation of parallelograms when two vectors are transported one in the direction of the other. In other words, just as the dual non-metric connection helps to preserve inner products, here the torsion dual connection "cooperates"with $\nabla$ in order to preserve the vectors from twisting when transported over the manifold. The defining relation for this torsion dual connection is eq. (\ref{tordu}) and can be easily remembered as the one (and only one) connection we should pick in order for the mutual torsion tensor of $\nabla$ and $\nabla^{\star}$ to be vanishing!  

In addition, we have  proved two basic Theorems associated with the dualistic space that consist of a connection and its torsion dual. To be more precise, we proved that if $\nabla$ and its torsion dual $\nabla^{\star}$ are metric then there exists a $3-form$ $A$ such that the connection coefficients and its dual counterparts are given by equations (\ref{A1}) and (\ref{A2}) respectively. For this geometric structure we also proved two basic properties: i) that the mean connection $1/2(\nabla+\nabla^{\star})$ is the Levi-Civita connection and that ii) Conjugation is involutive in the sense that $(\nabla^{\star})^{\star}$ holds true for this toirsion dual connection. Furthermore, we have obtained a result analogous to the fundamental Theorem of information geometry, but now for dual torsion manifolds. Specifically we showed that $\nabla$-flatness does not imply that the torsion dual connection $\nabla^{\star}$ is also flat but rather that its curvature takes the form $R_{jikl}^{\star}=4 \nabla_{[k}^{(0)}A_{|ij|l]}$. On the other hand, we have obtained a weaker condition on the geometries and in particular we have found that if $\nabla$ is scalar-flat then so is $\nabla^{\star}$ and vice-versa.

Finally, we proposed a novel  and self-consistent definition for the notion of mutual curvature of two connections $\nabla^{(1)}$ and $\nabla^{(2)}$. Contrary to other definitions found in the literature our definition (\ref{Riemfor2}) yields a results that is $\mathcal{F}$-linear in all its arguments, namely a true tensor. Furthermore, this definition comes with two additional bonus features; it does not discriminate among the two connections and is also antisymmetric in its inputs as the usual curvature of a single connection. With the definition of the mutual curvature tensor at hand, we also defined what we call a \underline{Curvature Dual Connection} which is obtained by demanding a  vanishing mutual curvature among the two connections.  The components of the curvature of such dual connection $\nabla^{\star}$ are given by eq. (\ref{curvdu}) which is analogous to the classical dual (non-metric) connection condition and torsion dual connection (\ref{tordu}) but with the key difference that  the connection components of the curvature dual connection are now given not by an algebraic but rather by a differential equation. It remains now to see how these new notions of torsion and curvature dual connections come into play in information geometry or some other physical situations.

\section{Acknowledgements}

 This work was supported by the Estonian Research Council grant (SJD14).
 I would like to the Prof. Steve Rosenberg for some useful email discussions.

	\bibliographystyle{unsrt}
	\bibliography{ref}

\end{document}